\documentclass[11pt,letterpaper]{amsart}
\usepackage{mathtools,amssymb,amsthm,mathrsfs,enumitem}
\usepackage[T1]{fontenc}
\usepackage[utf8]{inputenc}
\usepackage{geometry}
\usepackage{xr-hyper}
\usepackage[colorlinks,linkcolor=blue,citecolor=blue,urlcolor=blue]{hyperref}

\newtheorem{theorem}{Theorem}[section]
\newtheorem{lemma}[theorem]{Lemma}
\newtheorem{proposition}[theorem]{Proposition}
\theoremstyle{definition}
\newtheorem{remark}[theorem]{Remark}
\numberwithin{equation}{section}
\newcommand{\I}{\mathbf I}
\newcommand{\T}{\mathbb T}
\newcommand{\R}{\mathbb R}
\newcommand{\C}{\mathbb C}
\newcommand{\Z}{\mathbb Z}
\newcommand{\Q}{\mathbb Q}
\newcommand{\rmm}[1]{\mathrm{#1}}

\newcommand{\tnorm}[1]{\left\vert\mkern-1.5mu\left\vert\mkern-1.5mu\left\vert #1
\right\vert\mkern-1.5mu\right\vert\mkern-1.5mu\right\vert}

\title[DTMP in the subcritical Type I regime]{Dry Ten Martini Problem in the Subcritical Type I Regime}
\date{}

\author{Xianzhe Li}
\address{Department of Mathematics, University of California, Berkeley, CA 94720, USA} 
 \email{xianzhe@berkeley.edu}   

\begin{document}
\maketitle
\begin{abstract}

We establish a resolvent factorization of the hyperbolic projection
associated with the finite-range dual
operators, by a kernel with exponential off-diagonal decay.
As an application, we prove that, for every irrational frequency and every analytic potential, each subcritical Type I energy satisfying the gap-labelling condition is an
endpoint of an open spectral gap. This confirms the conjecture of Ge--Jitomirskaya--You \cite{GJY, You} in the subcritical regime. 

\end{abstract}
\section{Introduction}
Consider the one-frequency quasiperiodic Schr\"odinger operator
\[
(H_{v,\alpha,x}u)_n=u_{n+1}+u_{n-1}+v(x+n\alpha)u_n,
\qquad n\in\Z,
\]
where $\alpha\in\R\setminus\Q$, $x\in\T$, and
$v\in C^\omega(\T,\R)$. Its spectrum $\Sigma_{v,\alpha}$ is independent
of $x$. The integrated density of states $N_{v,\alpha}$ is constant on
each connected component of $\R\setminus\Sigma_{v,\alpha}$, and the Gap
Labelling Theorem \cite{gaplabel} assigns to every bounded spectral gap
an integer $\mathbf k\ne0$ through
\[
N_{v,\alpha}(E)\equiv \mathbf k\alpha\pmod{\Z}.
\]
The original Dry Ten Martini Problem (DTMP) concerns the almost Mathieu
operator, for which $v(x)=2\lambda\cos 2\pi x$, and asks whether, for
every irrational $\alpha$ and every $\lambda\ne0$, all gaps allowed by
this relation are open. Equivalently, one must rule out collapsed
labelled gaps. For a general analytic potential, the analogous question
is naturally posed for a fixed pair $(v,\alpha)$.
The DTMP and its variants have been extensively studied in the literature; see, for instance, \cite{CEY,P,AJ05,AJ08,LY2015,AYZ,GJY,LXZ,ABD,DGY,BBL,GeWXu}. We refer to \cite[Introduction]{LXZ} for a systematic historical overview and further references.
 Of particular relevance
to the present setting, Argentieri and Avila \cite{AA} showed that
arbitrarily small analytic perturbations in the subcritical regime may
collapse gaps and even produce interval spectrum. Thus one cannot expect
an unrestricted analytic stability statement in this regime, and the
Type I assumption is essential here.

The dynamical object associated with $H_{v,\alpha,x}$ is the
Schr\"odinger cocycle $(\alpha,S_E^v)$, where
\[
S_E^v(x)=\begin{pmatrix}E-v(x)&-1\\1&0\end{pmatrix}.
\]
Write $L_\varepsilon(E)$ for the Lyapunov exponent of its complexification
$S_E^v(\cdot+i\varepsilon)$. An energy in the spectrum is called subcritical if $L_\varepsilon(E)=0$ for all sufficiently small $|\varepsilon|$, supercritical if $L(E)>0$, and critical otherwise. Following
Ge--Jitomirskaya--You \cite{GJY}, one defines the first turning point
$\varepsilon_1(E)$ of $L_\varepsilon(E)$ and the $T$-acceleration
$\bar\omega(\alpha,S_E^v)$ as the quantized slope immediately to its right. An energy is called \emph{Type I} if its \(T\)-acceleration equals one, and the set of all Type I spectral energies is denoted by \(\Sigma_{v,\alpha}^1\). For a  fixed $\alpha$, a potential, and hence the corresponding operator, is called Type I if every energy in its spectrum is of Type I.

Type I operators were introduced as a stable class for which the dual
long-range dynamics has a two-dimensional center. Ge--Jitomirskaya--You
proved the corresponding Ten Martini statement \cite{GJY2} and formulated the Type I
conjecture that every gap allowed by the Gap Labelling Theorem is open
\cite{GJY,You}. In the subcritical regime, \cite{GJY} proved this for
trigonometric polynomial potentials under the arithmetic assumption
$\beta(\alpha)=0$. They have recently proved the full subcritical statement; see \cite{GJY2}. The supercritical part of the conjecture, for
every irrational frequency and general analytic potentials, was proved in
\cite{LXZ}.

We restate the subcritical result below and record a different proof. 
The main differences from the proof will be explained later.

\begin{theorem}\label{thm:subcritical}
Let $\alpha\in\R\setminus\Q$ and let $v$ be an analytic 
potential. Then every energy $E_{\mathbf k}\in\Sigma_{v,\alpha}^{1}$
such that $(\alpha,S_{E_{\mathbf k}}^v)$ is subcritical and
\[
N_{v,\alpha}(E_{\mathbf k})\equiv\mathbf k\alpha\pmod{\Z}
\]
is an endpoint of an open spectral gap.
\end{theorem}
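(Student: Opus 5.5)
The plan follows the Aubry-duality route, with the abstract's resolvent factorization as the new input. Fix a subcritical Type I energy $E=E_{\mathbf k}$ with $N_{v,\alpha}(E)\equiv\mathbf k\alpha \pmod{\Z}$. We argue by contradiction and assume $E$ is not the endpoint of an open gap, i.e.\ the labelled gap is collapsed. Since $v$ is analytic we may truncate its Fourier series. The dual operator $\widehat H_{v,\alpha,\theta}$ is then a long-range operator, approximated by finite-range dual operators with exponentially small error in a strip. Because $E$ is Type I and subcritical, the dual cocycle (of dimension $2d$) is partially hyperbolic with a two-dimensional center at $E$, and this persists for nearby energies and phases. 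The associated hyperbolic projection $P(E)$ onto the stable$\oplus$unstable bundles is analytic in the phase.

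\textbf{Key steps.} In order:
\begin{enumerate}[label=(\arabic*)]
\item \emph{Gap label and rotation number.} Subcritical plus gap labelling, via almost reducibility (Avila's ARC in the subcritical regime), gives that $(\alpha,S_E^v)$ is analytically conjugate to a constant parabolic or elliptic cocycle with rotation number $\mathbf k\alpha/2$ mod $\Z$, hence resonant. If it is conjugate to the identity-type rotation (the diagonalizable case), $E$ would lie in the interior of a spectral band neighbourhood, which contradicts the resonance structure. The relevant case is therefore reducibility to a nontrivial parabolic matrix $\begin{pmatrix}1&c\\0&1\end{pmatrix}$. Collapsed gap corresponds to $c=0$; open gap corresponds to $c\neq 0$. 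So the goal is to show $c\neq0$.
\item \emph{Dual eigenfunctions.} Via duality, reducibility to the identity produces two independent exponentially decaying solutions of the dual equation at phase $\theta=\mathbf k\alpha$ (and $-\mathbf k\alpha$). In other words, the center of the dual cocycle is spanned by two $\ell^2$ eigenvectors of $\widehat H$ at phases related by symmetry.
\item \emph{Resolvent factorization.} This is where the paper's main tool enters. Write $P(E)$ as a contour integral of resolvents of the finite-range dual operator restricted to half-lines. Equivalently, factor $I-P=K\,(\widehat H-E)$ with a kernel $K$ having exponential off-diagonal decay, uniform in the truncation. I would invoke this factorization to show that the center (the range of $I-P$) is spanned by vectors in the image of a bounded, exponentially localized kernel.
\item \emph{Contradiction.} Under $c=0$, use step (3) to show that the $2$-dimensional center would carry two dual eigenvectors at the same phase whose Wronskian-type pairing (computed through $K$) vanishes. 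Combined with the symmetric structure of the self-dual setup (the symplectic form restricted to the center is nondegenerate), this is impossible. Hence $c\neq0$, and a standard perturbation argument in $E$ (monotonicity of the fibered rotation number and the fact that a non-zero parabolic coefficient yields a uniformly hyperbolic cocycle on one side) shows that the gap with label $\mathbf k$ is open with endpoint $E$.
\end{enumerate}

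\textbf{Main obstacle.} I expect step (3)--(4) to be the hardest: controlling the kernel's exponential off-diagonal decay uniformly as the range of the dual operator grows, without arithmetic assumptions on $\alpha$ (no $\beta(\alpha)=0$). My first attempt is to obtain decay from the uniform partial hyperbolicity (cone-field estimates for the Type I dominated splitting), together with a Combes--Thomas style bound in the strip. Only center directions are non-decaying, and they are exactly removed by $P$. For Liouvillean $\alpha$, I would pass through periodic approximants $p_n/q_n$ and use continuity of the projections.
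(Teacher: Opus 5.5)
Steps (3)--(4) carry the whole argument, and they do not work as written. The factorization is oriented the wrong way. With $P$ the projection onto stable$\oplus$unstable, $I-P$ is the center projection. It cannot factor as $K(\widehat H-E)$ with a decaying kernel, because it would then kill every decaying solution, whereas bounded solutions have no hyperbolic component and so live entirely in the center. What factors through the operator is the \emph{hyperbolic} part, $\mathcal K^{E,d}(\theta;k,\cdot)(L_{v_d,\alpha,\theta}-E)=e_d^*(I_{2d}-G_dS_d)\Lambda_k^*$. Here $\mathcal K$ is written explicitly from the shifted resolvents $(L_{v_d(\cdot\pm i\varepsilon),\alpha,\theta}-E)^{-1}$: the $+$ one for $m<k$, the $-$ one for $m\ge k$, with weights $e^{\mp2\pi\varepsilon(k-m)}$. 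Its decay, uniform in $d$, is therefore immediate from the $d$-uniform resolvent bound, and no half-line contour integrals or cone/Combes--Thomas estimates are needed.

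Its use is reconstruction. If $(L_{v_d,\alpha,\theta}-E)\widehat u=r$, then
$\widehat u(k)=\mathcal O^{E,d}(\theta+k\alpha,0)\mathcal V_k+\sum_m\mathcal K^{E,d}(\theta;k,m)r(m)$.
So a dual solution is determined by its $\C^2$ center coordinates, up to an error controlled by the forcing.

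Your proposed contradiction is also not one. A vanishing Wronskian only says the two center coordinate vectors are collinear, which is fully compatible with nondegeneracy of the form on the center. Moreover, the Hermitian pairing $a^*\mathcal J_2b=0$ does not even force $a\parallel b$ for complex vectors. One needs the bilinear $D_k=\det(\mathcal V_k^+,\mathcal V_k^-)$, which is multiplied by the unimodular $\det\mathcal M^{E,d}$ and tends to $0$.

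The contradiction comes only after lifting collinearity in the center back to the full solutions through the reconstruction formula. This gives $\widehat u_+\approx\gamma_0\widehat u_-$, hence $u_+\approx\gamma_0u_-$ after Fourier inversion, which is incompatible with $\det U=1$. Controlling $\gamma_0$ also requires a lower bound on $\|\mathcal V_0^-\|$, which comes from the $L^2$ mass of the columns. Finally, the center exists only for the finite-range truncations, and Fourier coefficients of a conjugacy for $v$ solve $L_{v_d,\alpha,\theta}\widehat u=E\widehat u$ only up to a forcing. So the argument must be run with approximate solutions, never exact eigenvectors.

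Step (1) is a second gap. Subcriticality (ARC) gives almost reducibility, not exact analytic reducibility to a constant parabolic at the resonant energy. Without arithmetic assumptions on $\alpha$ you cannot reduce to the qualitative dichotomy $c=0$ versus $c\neq0$. Also, reducibility to $\pm I$ cannot be dismissed a priori as you do; it is precisely the collapsed-gap case.

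The paper stays at the fixed $\alpha$. It uses conjugacies $\bar B_{q_n}$ of degree $2\mathbf k$ with $\|\bar B_{q_n}\|_\gamma\le e^{o(q_n)}$, bringing $S^v_{E_{\mathbf k}}$ to $\pm(I+c_{q_n}\mathfrak L+F_{q_n})$ with $\|F_{q_n}\|_\gamma\le e^{-\kappa q_n}$. The dual phase is then $\theta\in\{0,1/2\}$ according to the sign, not $\pm\mathbf k\alpha$. What must be proved is the lower bound $|c_{q_n}|>e^{-o(q_n)}$ relative to the error, uniformly in $d$ and at the moving endpoints $E^d_{\mathbf k}$.

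This follows from a quantitative duality statement: no $U\in C^\omega_\epsilon(\T,\mathrm{SL}(2,\R))$ with $\|U\|_\epsilon\le e^{\delta_*q}$ satisfies $\|S^{v_d}_EU-\sigma U(\cdot+\alpha)\|_\epsilon\le e^{(-c+\delta_*)q}$. It is proved by the corrected two-column argument above, tracking all losses in $q$: the center recursion is propagated only for $|k|\le\delta_0q$, and Fourier tails handle the rest. The Moser--P\"oschel/cone argument and $|E^d_{\mathbf k}-E_{\mathbf k}|\le\|v_d-v\|_0$ then finish the proof. Periodic approximants $p_n/q_n$ with continuity of projections do not supply this quantitative lower bound.
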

Thus the Type I conjecture holds at every noncritical spectral energy
when the present theorem is combined with the supercritical result of
\cite{LXZ}. The critical regime is still open.

\subsection{Brief review and new ingredient}
The dynamical approach to the DTMP goes back to Puig
\cite{P,P06}. For the almost Mathieu operator with Diophantine frequency and small coupling constant,
he combined Aubry duality, reducibility, and the Moser--P\"oschel argument
to prove the opening of the labelled gaps. Avila--You--Zhou later developed
this framework into a quantitative scheme based on quantitative almost
reducibility and quantitative Aubry duality, extending the argument to all
irrational frequencies \cite{AYZ}, and the whole non-critical regime. 

The special feature of the Type I regime is
that the cocycle associated with the long-range dual operator is partially
hyperbolic with a two-dimensional center. The proof therefore rests on the
intrinsic-center representation of Ge--Jitomirskaya \cite{GJ}, a major
advance in the spectral theory of analytic one-frequency quasi-periodic
long-range operators. Although the dual equation is infinite-range and has
no finite-dimensional transfer matrix, they constructed an intrinsic
finite-dimensional Hermitian symplectic dynamics by identifying the center bundles of
finite-range approximants and proving their convergence. Their
two-dimensional center and its resolvent representation are essential
inputs here. See also \cite{LWYZ} for a broader operator-theoretic treatment
of canonical center dynamics in the multi-frequency setting.

For general analytic potentials, the supercritical Type I argument of
\cite{LXZ} introduced dimension-free quantitative Aubry duality and a cone
argument based on the monotonicity of the reconstructed center cocycle,
replacing the more explicit Moser--P\"oschel computation. At a labelled
subcritical gap boundary, quantitative almost reducibility produces an
almost-parabolic normal form \cite{Avi2023KAM,AYZ}. The remaining issue is to obtain, uniformly in the truncation, a lower bound on the parabolic coefficient relative to the error. In the classical setting, this is precisely where Puig's
dual argument enters.

In the subcritical Type I setting, however, the failure of simplicity of the dual eigenvalue prevents one from obtaining such a lower bound directly.
The argument of Ge--Jitomirskaya--You keeps two approximate solutions in the
full $2d$-dimensional state space and uses delicate quantitative relations
with the stable and unstable directions; see \cite[p.~100]{GJY}. This yields a
nondegeneracy statement for the center components of the approximate
solutions and ultimately allows one to exploit how approximate solutions evolve
under the two-dimensional center dynamics.

Our argument instead uses the factorization \eqref{kernal} below,
without explicitly identifying or estimating the hyperbolic directions.
As observed in \cite{LXZ}, $G_d$ constructed in
\cite{GJ} encodes the center intrinsically and
\(
(G_dS_d)^2=G_dS_d,
\)
so that $G_dS_d$ is the projection onto the center.
Define the inclusion of the window centered at $k$ by
\(
\Lambda_k:\C^{2d}\longrightarrow\ell^2(\Z)
\),
\(
\Lambda_ke_{d-j}=\delta_{k+j}
\),
\(-d\leq j\leq d-1\).
To study a forced dual equation
\(
(L_{v_d,\alpha,\theta}-E)u=r,
\)
we restrict to a fixed window and set
\(
\mathcal U_k=\Lambda_k^*u.
\)
Its center component is
\(
G_d(E,\theta+k\alpha)S_d\,\mathcal U_k,
\)
so the error in recovering $u(k)$ from its center coordinates is the
corresponding coordinate of the hyperbolic component:
\(
\zeta(k)
=
e_d^*\bigl(I_{2d}-G_d(E,\theta+k\alpha)S_d\bigr)\mathcal U_k.
\)
To estimate this error in terms of the forcing $r$, we seek (in
Proposition~\ref{prop:backward-cut-kernel}) a row kernel with exponential off-diagonal decay 
satisfying
\begin{equation}\label{kernal}
    \mathcal K^{E,d}(\theta;k,\cdot)(L_{v_d,\alpha,\theta}-E)
=
e_d^*\bigl(I_{2d}-G_d(E,\theta+k\alpha)S_d\bigr)\Lambda_k^*.
\end{equation}
Applying this identity to $u$ then gives
\(
\zeta(k)
=
\sum_{m\in\mathbb Z}
\mathcal K^{E,d}(\theta;k,m)r(m).
\)
Thus, whenever $r$ is exponentially small, the hyperbolic component
$\zeta(k)$ is exponentially small as well. Hence an approximate dual
solution is recovered from its center component up to an exponentially
small error.
Once a dominated two-dimensional center dynamics has been isolated, the
remaining argument is essentially the usual all frequency Puig argument \cite{AYZ}.

The resolvent factorization \eqref{kernal} actually applies to any gap between
consecutive turning points, irrespective of its slope, i.e., the center dimension.
It may therefore be of independent interest, and we believe it could be
useful in more general contexts, which we will try to explore in future.

\subsection{Organization of the paper}
Section~2 introduces the notation and the objects used in the proof.
Section~3 presents the energy-dependent center construction with
estimates uniform in the truncation degree. Section~4 proves the
quantitative Aubry-duality estimate using the hyperbolic projection
described above. Section~5 combines this estimate with quantitative
almost reducibility to complete the proof of
Theorem~\ref{thm:subcritical}. The appendix derives the resolvent factorization \eqref{kernal}
from the shifted dual resolvents.

\subsection*{AI Statement}
The author used LLMs to assist with manuscript editing. All the main ideas in this paper came from the human author. 

\subsection*{Acknowledgements} The author was partially supported by NSF DMS-2052899, DMS-2155211, Simons 
896624 and an AMS-Simons Travel Grant.

\section{Preliminaries}
For $h>0$, set
\(
\T_h:=\{\theta\in\C/\Z:|\Im\theta|<h\}.
\)
We use $^*$ to denote conjugate transpose; vector norms are
Euclidean and matrix norms are the induced operator norms.
Let $e_1,\ldots,e_{2d}$ and $(\delta_j)_{j\in\Z}$ be the standard
orthonormal bases of $\C^{2d}$ and $\ell^2(\Z)$, respectively.
For a finite-dimensional normed space $X$, let
$C_h^\omega(\T,X)$ denote the $X$-valued bounded holomorphic functions  on
$\T_h$, equipped with
\(
\|F\|_h:=\sup_{\theta\in\T_h}\|F(\theta)\|.
\)
For $h=0$, this means $\|F\|_0:=\sup_{\theta\in\T}\|F(\theta)\|$.
For a family $F^E$ continuous in $E\in\I$, write
\(
\|F\|_{\I,h}:=\sup_{E\in\I}\|F^E\|_h.
\)
For real- or group-valued analytic functions, the target condition is
imposed on the real torus and the function is extended holomorphically
to $\T_h$. Set
\(
 \mathcal J_2=\begin{pmatrix}0&-1\\1&0\end{pmatrix}\), and \(S_d=(S_d)_{j,\ell}
 :=(\mathbf1_{j<0}-\mathbf1_{\ell<0})
 \widehat v_{\ell-j}\mathbf1_{|\ell-j|\leq d},\ 
-d\leq j,\ell\leq d-1.
\)We define the corresponding Hermitian
symplectic forms by
\(
\psi_d(u,w):=u^*S_dw\),
\(\omega_1(a,b):=a^*\mathcal J_2b.
\) We therefore consider the collections
\begin{align*}
\rmm{HSp}(2)
&:=\{M\in\rmm{GL}(2,\C):M^*\mathcal J_2M=\mathcal J_2\},\\
\rmm{HSp}(2d,\psi_d)
&:=\{A\in\rmm{GL}(2d,\C):A^*S_dA=S_d\},\\
\rmm{HSp}(2d\times2,\psi_d,\omega_1)
&:=\{F\in\C^{2d\times2}:F^*S_dF=\mathcal J_2\}.
\end{align*}
For $v\in C^\omega(\T,\R)$, denote its $d$-th truncation by
\(
v_d(x)=\sum_{|j|\leq d}\widehat v_j e^{2\pi i jx}.
\)
Then the corresponding long-range dual operator is given by
\[
(L_{v_d,\alpha,x}u)_n
=\sum_{j=-d}^{d}\widehat v_j u_{n+j}
+2\cos 2\pi(x+n\alpha)u_n.
\]
Assume $\widehat v_d\ne0$. For a solution of
$L_{v_d,\alpha,x}u=Eu$, the long-range transfer matrix $L^{E,d}$ is defined by
\[
U_{n+1}=L^{E,d}(x+n\alpha)U_n,\qquad U_n=(u_{n+d-1},\ldots,u_n,u_{n-1},\ldots,u_{n-d})^\top.
\]
So its state coordinates are ordered as
$d-1,\ldots,0,-1,\ldots,-d$. 
Then
$L^{E,d}\in \rmm{HSp}(2d,\psi_d)$.

For a matrix-valued function $F$ with rows $F(\theta,k)$ indexed by
$-d\leq k\leq d-1$, define the weighted analytic norms
\[
\tnorm{F}_{h}^\eta
:=\sum_{k=-d}^{d-1}e^{-2\pi\eta|k|}
  \|F(\cdot,k)\|_h,
\qquad
\tnorm{F}_{\I,h}^\eta
:=\sup_{E\in\I}\tnorm{F^E}_{h}^\eta.
\]
When $d'>d$, the norm
$\tnorm{F^d-F^{d'}}_{\I,h}^\eta$ is taken over the common rows
$-d\leq k\leq d-1$. This is the weighted norm of \cite{LXZ}, with
weight parameter $\xi=2\pi\eta$.

\section{Center Convergence}

Throughout, all truncation degrees below satisfy $\widehat v_d\ne0$.
If $v$ is a trigonometric polynomial, keep its actual degree fixed
and use the corresponding fixed-dimensional center. The convergence
and endpoint approximation steps are then unnecessary. We write the
proof below for the infinite-range case; its fixed-degree version
uses the same estimates. 
We firstly record the quantitative, energy-dependent part of the center
construction originally used in \cite[Theorem~6.3]{GJ}.

\begin{theorem}[\cite{GJ}]\label{theorem-main-general}
Let $\alpha\in\R\backslash\Q$, $v\in C_{h_1}^\omega(\T,\R)$, and $E_*\in \R$ with
$\bar{\omega}(\alpha,S_{E_*}^v)=1$ and $\varepsilon_1(E_*)>0$.
Then there exist a sufficiently small compact interval $\I$ containing $E_*$
in its interior, $h_0>0$ and $d_0$ such that, for every $d\geq d_0$
with $\widehat v_d\ne0$, there are families, continuous in $E\in\I$,
\[
\mathcal{O}^{E,d}\in C_{h_0}^\omega
(\T,\rmm{HSp}(2d\times 2,\psi_d,\omega_1)),\qquad
\mathcal M^{E,d}\in C_{h_0}^\omega(\T,\rmm{HSp}(2)),
\]
such that
\begin{equation}\label{eq:center-invariance}
L^{E,d}(x)\mathcal{O}^{E,d}(x)
=\mathcal{O}^{E,d}(x+\alpha)\mathcal M^{E,d}(x),
\end{equation}
where $\mathcal M^{E,d}\rightarrow\mathcal M^E$ uniformly for
$E\in\I$ in the $\|\cdot\|_{h_0}$ norm, and $E\mapsto\mathcal M^E$
is smooth.
Furthermore, if we write
\[
\mathcal{O}^{E,d}(x)=\begin{pmatrix}
\mathcal{O}^{E,d}(x,d-1) &
\mathcal{O}^{E,d}(x,d-2) &
\cdots &
\mathcal{O}^{E,d}(x,-d)
\end{pmatrix}^\top,
\]
then there exist $0<\gamma<\varepsilon<\eta<h_1-\gamma$ and $K\geq1$, depending only on
$\alpha,v,h_1,\I$ and independent of $d$, such that the following properties hold:
\begin{enumerate}[label=(H\arabic*), leftmargin=*, itemsep=3pt]
\item
\(
\displaystyle
\tnorm{\mathcal O^{E,d}}_{\I,h_0}^\eta\leq K,
\qquad
\tnorm{\mathcal O^{E,d}-\mathcal O^{E,d'}}_{\I,h_0}^\eta
\leq K\|v_d-v_{d'}\|_{h_1-\gamma},\quad d'>d\geq d_0.
\)
\item $\|\mathcal M^{E,d}-\mathcal M^E\|_{\I,h_0}
\leq K\|v_d-v\|_{h_1-\gamma}$.
\item For every $E\in\I$, the shifted dual resolvents
\(
R_\pm^{E,d}(\theta)
:=\bigl(L_{v_d(\cdot\pm i\varepsilon),\alpha,\theta}-E\bigr)^{-1}
\)
are holomorphic in $\theta$
on $\T_{h_0}$ and satisfy
\(
\|R_\pm^{E,d}(\theta)\|_{\I,h_0}\leq K.
\)
\item For $(E,\theta)\in\I\times\T$ and
$\ell,m\in\Z$, define
\begin{align*}
    g_{v_d(\cdot\pm i\varepsilon)}^\ell(E,\theta,m)
&:=\langle\delta_m,R_\pm^{E,d}(\theta)\delta_\ell\rangle \\
u_d^\ell(E,\theta,m)
&:=e^{-2\pi\varepsilon(m-\ell)}
 g_{v_d(\cdot+i\varepsilon)}^\ell(E,\theta,m)-e^{2\pi\varepsilon(m-\ell)}
 g_{v_d(\cdot-i\varepsilon)}^\ell(E,\theta,m).
\end{align*}
Then for $-d\leq m,\ell\leq d-1$, 
\[
\begin{split}
(G_d)_{m,\ell}(E,\theta)
&:=-u_d^\ell(E,\theta,m)=-\Bigl[\mathcal O^{E,d}(\theta)\mathcal J_2
  \mathcal O^{E,d}(\theta)^*\Bigr]_{m,\ell}.
\end{split}
\]
\end{enumerate}
\end{theorem}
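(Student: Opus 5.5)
This statement is imported from \cite[Theorem~6.3]{GJ}, so the plan is to reconstruct it from the Ge--Jitomirskaya center construction and keep track of uniformity in $d$ and continuity in $E$.

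\emph{Step 1: Lyapunov spectrum of the truncated dual cocycles.} By Aubry duality and the Type I hypothesis, $\bar\omega(\alpha,S_{E_*}^v)=1$ with $\varepsilon_1(E_*)>0$. The Lyapunov spectrum of the dual cocycle $(\alpha,L^{E,d})$ is then read off from the turning points of $\varepsilon\mapsto L_\varepsilon(E)$. One obtains $2d-2$ exponents bounded away from zero, at least $2\pi\varepsilon$ in absolute value, and a two-dimensional center. Upper semicontinuity of Lyapunov exponents in $(E,v_d)$ and quantization of the acceleration keep this gap uniform for $E$ in a small interval $\I$ and for all $d\ge d_0$.

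\emph{Step 2: shifted resolvents and the bounds (H3).} Conjugating $L_{v_d,\alpha,\theta}$ by the weight $e^{\pm2\pi\varepsilon n}$ gives $L_{v_d(\cdot\pm i\varepsilon),\alpha,\theta}$. Here $\varepsilon$ is chosen strictly between the center rate and the first hyperbolic rate, so that $\gamma<\varepsilon<\eta$. The shifted operator has no center in its spectrum at energy $E$, by a dominated-splitting/Combes--Thomas argument, and is therefore invertible with norm at most $K$, uniformly in $d$ and holomorphically in $\theta\in\T_{h_0}$.

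\emph{Step 3: the center from resolvent differences, giving (H4).} The functions $u_d^\ell(E,\theta,\cdot)$ are differences of two Green's functions with the same source, reweighted back. Each is therefore a genuine solution of $(L_{v_d,\alpha,\theta}-E)u=0$. One side decays like $e^{-2\pi(\eta-\varepsilon)|m|}$ on the stable part and the other on the unstable part, so their difference lies in the center bundle. Letting $\ell$ range over a window, these solutions span the center. Hermitian symplecticity of $L^{E,d}$ with respect to $\psi_d$ lets one write the resulting rank-two kernel as $\mathcal O\mathcal J_2\mathcal O^*$, with $\mathcal O^*S_d\mathcal O=\mathcal J_2$. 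In practice one chooses two columns of $G_d$ and normalizes them by a $2\times2$ Gram-type matrix, which is nondegenerate precisely because the center is two-dimensional. Invariance under the shift $\theta\mapsto\theta+\alpha$ then yields \eqref{eq:center-invariance}, with $\mathcal M^{E,d}=\mathcal O^{E,d}(x+\alpha)^{-1}$ acting on the center, that is, $-\mathcal J_2\mathcal O^*(x+\alpha)S_dL^{E,d}\mathcal O(x)$. This lies in $\rmm{HSp}(2)$. Continuity in $E$ follows from continuity of the resolvents.

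\emph{Step 4: uniform estimates and convergence, giving (H1)--(H2).} The weighted bound on $\mathcal O^{E,d}$ comes from the exponential decay of the Green's functions, which is why the weight is $e^{-2\pi\eta|k|}$. For the convergence estimates, use the resolvent identity
\[
R^{E,d}-R^{E,d'}=R^{E,d}(v_{d'}-v_d)^{\wedge}R^{E,d'},
\]
where the difference operator has norm at most $\|v_d-v_{d'}\|_{h_1-\gamma}$ after the $\varepsilon$-conjugation. This gives Cauchy estimates for $G_d$, hence for $\mathcal O^{E,d}$ (through the normalization) and for $\mathcal M^{E,d}$. The limit $\mathcal M^E$ exists, and its smoothness in $E$ follows from smoothness of $E\mapsto R_\pm^E$ on the resolvent set.

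\textbf{Main obstacle.} I expect Step 3 to be the hard part: producing a normalization of the center frame with $F^*S_dF=\mathcal J_2$ that is analytic in $\theta$ on a uniform strip, continuous in $E$, and uniform in $d$. A nonvanishing determinant locally is not enough; it must hold globally on $\T_{h_0}$, which may force a choice of frame that depends on $\theta$. To handle this I would use the Hermitian-symplectic signature of the center, $(1,1)$, to build the frame from a global analytic section of the projection $G_dS_d$.
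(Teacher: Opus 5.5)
\paragraph{Comparison with the paper's proof.} The paper does not reconstruct the Ge--Jitomirskaya construction. Its proof imports all of it from \cite[Theorem~9.2]{LXZ} and only fixes parameters:
\begin{itemize}
\item it takes $\varepsilon\in(\varepsilon_1(E_*),\min\{\varepsilon_2(E_*),h_1\})$;
\item it shrinks $\I$ and enlarges $d_0$ so that this separation holds uniformly for $E\in\I$ and for all $v_d$;
\item it lets $\rho$ be the margin from $\varepsilon$ to the endpoints of the slope-one segments and takes $4h_0<\rho$, which is what gives (H3) on $\overline{\T_{h_0}}$;
\item only then does it pick $\eta\in(\varepsilon,h_1)$ and $0<\gamma<\min\{\varepsilon,h_1-\eta\}$.
\end{itemize}
Your bookkeeping conflicts with this. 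You treat $\gamma$ and $\eta$ as the center rate and the first hyperbolic rate. Then the required inequality $\eta<h_1-\gamma$ would need $\varepsilon_1(E_*)+\varepsilon_2(E_*)<h_1$, which is not available; the second turning point need not even lie below $h_1$. In the statement, $\gamma$ is just an analyticity loss in $\|v_d-v_{d'}\|_{h_1-\gamma}$. The weight $e^{-2\pi\eta|k|}$ compensates \emph{growth}, not decay: $\|R_\pm^{E,d}\|\le K$ only gives $|u_d^\ell(E,\theta,m)|\le 2Ke^{2\pi\varepsilon|m-\ell|}$. So the rows of $\mathcal O^{E,d}$ may grow like $e^{2\pi\varepsilon|k|}$, and any $\eta>\varepsilon$ absorbs this. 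Two smaller points: you never say how $h_0$ is chosen, and upper semicontinuity alone gives no lower bound on $L_\varepsilon(E)$. It therefore does not stop the first turning point from moving past $\varepsilon$ for nearby $E$ and $v_d$. For that you need continuity of $L_\varepsilon$, or openness of uniform hyperbolicity of the complexified cocycle, together with quantization.

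\paragraph{The gap.} The real problem is the one you flag yourself, and it breaks Step 4 as written. A frame with $\mathcal O^*S_d\mathcal O=\mathcal J_2$ and $G_d=-\mathcal O\mathcal J_2\mathcal O^*$ is determined only up to an analytic gauge $\mathcal O\mapsto\mathcal OP$ with $P(\theta)\in\rmm{HSp}(2)$. This gauge leaves $G_d$ unchanged and sends $\mathcal M\mapsto P(\cdot+\alpha)^{-1}\mathcal MP$. The resolvent identity gives Cauchy estimates only for the gauge-invariant kernel $G_d$. By contrast, the bound $\tnorm{\mathcal O^{E,d}-\mathcal O^{E,d'}}_{\I,h_0}^\eta\le K\|v_d-v_{d'}\|_{h_1-\gamma}$, (H2), the existence of $\mathcal M^E$, and its smoothness in $E$ all concern gauge-dependent objects. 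They follow only from a gauge fixed once for all $d$ that is analytic on a $d$-independent strip, continuous (for the limit, smooth) in $E$, and Lipschitz in $G_d$.

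Neither of your two normalizations meets these requirements:
\begin{itemize}
\item Your two-column Gram normalization is Lipschitz, but it degenerates wherever the two chosen rows of $\mathcal O$ become linearly dependent. The Gram matrix is, up to sign, the principal $2\times2$ block of $G_d$ on the chosen indices, and its determinant is $|\det|^2$ of those two rows.
\item A global section obtained topologically from the signature $(1,1)$ exists, but it comes with no uniform strip, no uniform constant, and no relation between different $d$.
\end{itemize}
So ``hence for $\mathcal O^{E,d}$ (through the normalization)'' is unsupported, and so is smoothness of $\mathcal M^E$ following from smoothness of $R^E_\pm$. This coherent global normalization is exactly what the paper takes from \cite[Theorem~9.2]{LXZ} rather than proving. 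To complete your route you would need such a construction with all bounds uniform in $d$. One possibility is to fix a reference frame once, transport it to each level $d$ by the projections $G_dS_d$, and then apply a $2\times2$ Hermitian-symplectic renormalization close to the identity.
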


\begin{proof}
All details of the construction are contained in the adapted version
\cite[Theorem~9.2]{LXZ}; we only explain the choice of parameters used
in the present statement. First choose a fixed shift $\varepsilon$ with
\(
\varepsilon_1(E_*)<\varepsilon<\min\{\varepsilon_2(E_*),
h_1\},
\)
where $E_*$ is the energy in the hypothesis and $\varepsilon_2(E_*)$
is the next turning point, with $h_1$ used as the upper endpoint if
there is no second turning point in the analytic strip. After shrinking
$\I$ around $E_*$ and increasing $d_0$, the same strict separation holds
uniformly for all $E\in\I$, both for $v$ and for every $v_d$ with
$d\geq d_0$.
Let $\rho>0$ be a common lower bound for the distance from $\varepsilon$
to the two endpoints of these affine segments. Choose $h_0>0$ so that
$4h_0<\rho$.  After shrinking the complex
energy neighborhood, the shifted resolvents are therefore uniform on
$\overline{\T_{h_0}}$ as required in (H3).
Next choose $\eta$ with $\varepsilon<\eta<h_1$, and then choose
\(
0<\gamma<\min\{\varepsilon,h_1-\eta\}.
\)
Thus $0<\gamma<\varepsilon<\eta<h_1-\gamma$. Applying the construction
of \cite[Theorem~9.2]{LXZ} with these fixed parameters gives all the
assertions above.
\end{proof}

\begin{remark}
There is no essential distinction in this construction between
$\bar\omega=1$ and $\bar\omega=k$, or between the first turning point
and any later one. The chosen turning point is used only to provide the
strict gap condition that isolates the corresponding center bundle.
See \cite{LWYZ} for a
related discussion.
\end{remark}

Let $E_{\mathbf k}$ be the maximal energy satisfying $N_{v,\alpha}(E_{\mathbf k})\equiv\mathbf k\alpha\pmod{\Z}$ and
$E_{\mathbf k}^d$ denote the maximal energy with the same IDS label
for $v_d$. \cite[Lemma 9.1]{LXZ} gives
\begin{equation}\label{eq:ends}
|E_{\mathbf k}^d-E_{\mathbf k}|\leq\|v_d-v\|_0,
\end{equation}
so $E_{\mathbf k}^d\in\I$ for all sufficiently large $d$. 
In particular, convergence and the Hermitian symplecticity give
a constant $K_M\geq1$, independent of $E\in\I$ and $d$, such that
\begin{equation}\label{eq:uniform-M}
\|\mathcal M^{E,d}\|_0+\|(\mathcal M^{E,d})^{-1}\|_0\leq K_M.
\end{equation}
The dependence on $E$ is needed at the moving endpoints. Indeed,
\begin{equation}\label{eq:moving-center}
\|\mathcal M^{E_{\mathbf k}^d,d}
       -\mathcal M^{E_{\mathbf k}}\|_0
\leq\sup_{E\in\I}
   \|\mathcal M^{E,d}-\mathcal M^E\|_0 +\|\mathcal M^{E_{\mathbf k}^d}
                   -\mathcal M^{E_{\mathbf k}}\|_0
\longrightarrow0.
\end{equation}
Thus all the two-dimensional
iteration bounds used below are uniform at $E=E_{\mathbf k}^d$.

\section{Quantitative Aubry duality}

We now prove the analogue of \cite[Proposition~7.1]{LXZ}
with the original Schr\"odinger cocycle as input. The Fourier
transform then gives two approximate solutions of the long-range
dual equation. The purpose of the center construction is to carry
out the final two-column contradiction in dimension two.

\begin{proposition}\label{prop:subcritical-duality}
Let $\alpha,v,\I$ be as in Theorem~\ref{theorem-main-general}.
For every $0<\epsilon<h_1$ and $c>0$, there exist
$\delta_*>0$ and $q_*>0$ such that, for $q>q_*$, $E\in\I$ and
$d\geq d_0$, there are no $\sigma\in\{1,-1\}$ and
$U_d\in C^\omega_\epsilon(\T,\rmm{SL}(2,\R))$ satisfying
\begin{equation}\label{eq:forbidden}
\|S_E^{v_d}(\cdot)U_d(\cdot)
   -\sigma U_d(\cdot+\alpha)\|_\epsilon
\leq e^{(-c+\delta_*)q},\qquad\|U_d\|_\epsilon\leq e^{\delta_*q}.
\end{equation}
All constants are independent of $E$ and $d$.
\end{proposition}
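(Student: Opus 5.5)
We argue by contradiction, following the scheme of \cite[Proposition~7.1]{LXZ}. Suppose $U_d$ and $\sigma$ satisfy \eqref{eq:forbidden}. Write the two columns of $U_d$ as $(U_d^{(1)},U_d^{(2)})$, with first components $f_1,f_2\in C^\omega_\epsilon(\T,\C)$. The Schr\"odinger structure of $S_E^{v_d}$ forces the second row of $U_d(\cdot+\alpha)$ to equal the first row of $U_d$. The first row of \eqref{eq:forbidden} therefore gives
\[
(E-v_d(x))f_i(x)-f_i(x-\alpha)-\sigma f_i(x+\alpha)=O(e^{(-c+\delta_*)q}).
\]
We absorb $\sigma$ by the half-shift $x\mapsto x+\tfrac12$ in the phase. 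Taking Fourier coefficients then gives, for a suitable phase $\theta$ (namely $\theta=0$ or $\tfrac12$), two sequences $u_i(n)=\widehat f_i(n)e^{2\pi i n\theta}$ solving
\[
(L_{v_d,\alpha,\theta}-E)u_i=r_i .
\]
The forcing satisfies $|r_i(n)|\le e^{(-c+\delta_*)q}e^{-2\pi\epsilon|n|}$, and the solutions satisfy $|u_i(n)|\le e^{\delta_*q}e^{-2\pi\epsilon|n|}$. The nondegeneracy inherited from $\det U_d=1$ is
\[
\int_{\T}\bigl(f_1(x)f_2(x-\alpha)-f_2(x)f_1(x-\alpha)\bigr)\,dx=1,
\]
which by Parseval becomes a Wronskian-type bilinear identity between $u_1$ and $u_2$, localized near $n=0$ up to $e^{-2\pi\epsilon|n|}$ tails.

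The second step projects onto the center. For each $k$ let $\mathcal U^i_k=\Lambda_k^*u_i$ and decompose
\[
\mathcal U^i_k=G_d(E,\theta+k\alpha)S_d\,\mathcal U^i_k+(I-G_dS_d)\mathcal U^i_k .
\]
By (H4), $G_dS_d=-\mathcal O\mathcal J_2\mathcal O^*S_d$. Hence the center part equals $\mathcal O^{E,d}(\theta+k\alpha)c^i_k$, where $c^i_k:=-\mathcal J_2\mathcal O^*S_d\,\mathcal U^i_k\in\C^2$ is bounded using (H1). We would invoke the factorization \eqref{kernal} (Proposition~\ref{prop:backward-cut-kernel}), whose kernel is built from $R_\pm^{E,d}$ with bounds from (H3). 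Since the kernel decays exponentially off the diagonal at a rate independent of $d$, each coordinate $\zeta_i(k)$ of the hyperbolic part is $O(e^{(-c+2\delta_*)q})$ for $|k|\lesssim q$. Applying this at every coordinate of the window, the same estimate holds for the full vector $(I-G_dS_d)\mathcal U^i_k$.

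Next we exploit the center dynamics. Using \eqref{eq:center-invariance} and $(L-E)u_i=r_i$, the coefficients satisfy
\[
c^i_{k+1}=\mathcal M^{E,d}(\theta+k\alpha)c^i_k+O(e^{(-c+2\delta_*)q}),
\]
uniformly in $d$. Here we use \eqref{eq:uniform-M} and the fact that $\mathcal O^*S_d$ annihilates the hyperbolic part. Because $\mathcal O^*S_d\mathcal O=\mathcal J_2$, the Hermitian symplectic form $\psi_d(\mathcal U^1_k,\mathcal U^2_k)$ reduces, up to exponentially small error, to $\omega_1(c^1_k,c^2_k)$. The Parseval identity above then gives $|\omega_1(c^1_0,c^2_0)|\gtrsim 1$, after using the $e^{-2\pi\epsilon|n|}$ decay and the weighted norm with $\eta>\epsilon$ being irrelevant at $k=0$. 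At the same time, the $\ell^2$ decay of $u_i$ and the bound on $c^i_k$ give $|c^i_k|\le e^{\delta_*q}e^{-2\pi\epsilon|k|}\cdot\mathrm{poly}$.

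The contradiction then comes from iterating the two-dimensional center dynamics; this is the usual all-frequency Puig step \cite{AYZ}. We would take $k\sim q$ and propagate from $0$ to $k$ with growth bounded by $K_M^{k}$. Here $K_M$ does not help directly, so we instead use the symplectic invariance of $\omega_1(c^1_k,c^2_k)$ under $\mathcal M$, which is exact up to accumulated errors of size $kK_M^{k}e^{(-c+2\delta_*)q}$. To keep $K_M^{k}$ harmless we would choose $k=\lfloor\kappa q\rfloor$ with $\kappa$ small relative to $c/\log K_M$ and $\epsilon$. Invariance gives $|\omega_1(c^1_k,c^2_k)|\gtrsim1$, while the decay gives $|\omega_1(c^1_k,c^2_k)|\lesssim e^{2\delta_*q-4\pi\epsilon\kappa q}$. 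Taking $\delta_*\ll\epsilon\kappa$ and $q$ large yields a contradiction. The main obstacle is the second step: making the hyperbolic-component bound genuinely uniform in $d$, with the kernel decay rate exceeding what is needed against $e^{\delta_*q}$ growth. It also requires checking that $c^i_k$ inherits the decay from $u_i$ through the weighted norm $\tnorm{\cdot}^\eta$. I would first try to get this directly from (H1) with $\eta$ compared to $\epsilon$, shrinking $\epsilon$ if necessary, which is harmless since the hypothesis only weakens.
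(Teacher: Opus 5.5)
There is a genuine gap in the contradiction mechanism. The step ``the Parseval identity then gives $|\omega_1(c^1_0,c^2_0)|\gtrsim 1$'' is unjustified, and in fact false.

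Integrating $\det U_d$ and using Parseval (with $f_2$ real) gives
\[
2i\sum_{n\in\Z}\sin(2\pi n\alpha)\,\widehat f_1(n)\overline{\widehat f_2(n)}=\sigma+O\bigl(e^{(-c+2\delta_*)q}\bigr).
\]
This is a global pairing weighted by $\sin 2\pi n\alpha$; up to a constant it is $\langle u_2,\partial_\theta L_{v_d,\alpha,\theta}u_1\rangle$. It is not the local boundary form $\psi_d(\mathcal U^1_0,\mathcal U^2_0)$, which only pairs $u_1(n)$ with $u_2(m)$ for $n,m$ on opposite sides of the origin, weighted by $\widehat v_{m-n}$.

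Worse, since $\theta\in\{0,\tfrac12\}$ and $E$ are real, $L_{v_d,\alpha,\theta}$ is self-adjoint, and the discrete Green identity gives exactly
\[
\psi_d(\mathcal U^1_k,\mathcal U^2_k)=\sum_{n<k}\bigl(\overline{u_1(n)}\,r_2(n)-\overline{r_1(n)}\,u_2(n)\bigr).
\]
So $\psi_d(\mathcal U^1_k,\mathcal U^2_k)$, and with it your $\omega_1(c^1_k,c^2_k)$, is $O(e^{(-c+2\delta_*)q})$ for every $k$. Your last step (approximate $\mathcal M^{E,d}$-invariance plus decay) only re-derives this smallness. The two-dimensional center also plays no role in the contradiction you derive. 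That is a warning sign, since without the Type I structure labelled gaps can collapse (cf.\ \cite{AA}). Finally, even $\omega_1(c^1_0,c^2_0)\approx 0$ is useless on its own: the Hermitian form $a^*\mathcal J_2 b$ on $\C^2$ has non-collinear isotropic pairs, e.g.\ $(1,i)^\top$ and $(1,-i)^\top$.

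What is missing is the collinearity argument, which uses $\det U_d=1$ twice.

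\emph{First use, column by column.} With $\|U_d\|_\epsilon\le e^{\delta_*q}$ and the second row of the equation, $\det U_d=1$ gives $\|u^d_\pm(0,\cdot)\|_{L^2}\ge C^{-1}e^{-\delta_*q}$. Pass this through the reconstruction $\widehat u_\pm(k)=\mathcal O^{E,d}(\theta+k\alpha,0)\mathcal V^\pm_k+\zeta_\pm(k)$. Only the $e_d^*$ row of $I_{2d}-G_dS_d$ is needed here, which is exactly what the kernel controls, so your full-window claim is unnecessary. Propagating by $\mathcal M^{E,d}$ over $|k|\lesssim\delta_*q$ then yields $\|\mathcal V_0^\pm\|\ge e^{-C\delta_*q-o(q)}$.

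\emph{Bilinear determinant.} Set $D_k=\det(\mathcal V_k^+,\mathcal V_k^-)$. Then $D_{k+1}=\det\mathcal M^{E,d}(\theta+k\alpha)\,D_k+\eta_k$ with $|\det\mathcal M^{E,d}|=1$, and $D_k\to0$. Summing the errors gives $|D_0|\lesssim e^{(-c+C\delta_*)q}$. In dimension two, together with the lower bounds, this forces $\mathcal V_0^+\approx\gamma_0\mathcal V_0^-$ with $|\gamma_0|\le e^{C\delta_*q+o(q)}$.

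\emph{Second use, pointwise.} Propagate the near-collinearity for $|k|\le\delta_0q$, reconstruct $\widehat u_+-\gamma_0\widehat u_-$ there, and use Fourier tails beyond. This gives $\|u_+^d-\gamma_0u_-^d\|_0\le e^{-c_1q}$, contradicting the pointwise identity $1=\det\bigl(u_+^d-\gamma_0u_-^d,\,u_-^d\bigr)(x)$.

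Two smaller points. The $e^{2\pi\eta|j|}$ growth of the frame rows is absorbed by $S_d$, not by shrinking $\epsilon$: its nonzero entries are $\widehat v_{\ell-j}$ across the origin, hence $O(e^{-2\pi h_1(|j|+|\ell|)})$ with $\eta<h_1$. Also, in your scalar equation the term $f_i(x-\alpha)$ must carry the factor $\sigma$; otherwise the half-shift does not absorb the sign.
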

\begin{proof}
We proceed by contradiction, following the two-column argument in \cite[Proposition~7.1]{LXZ}. Suppose \eqref{eq:forbidden}
holds. Throughout the proof, constants are independent of
$E,d,q$ and $0<\delta_*\leq1$; we choose $\delta_*$ at the end.

Write
\[
U_d(x)=\bigl(u_+^d(x),u_-^d(x)\bigr),\qquad
u_\pm^d(x)=
\begin{pmatrix}u_\pm^d(0,x)\\u_\pm^d(-1,x)\end{pmatrix}.
\]
Let $f_\pm^d$ be the columns of
$S_E^{v_d}U_d-\sigma U_d(\cdot+\alpha)$.
The two rows of the equation are
\begin{align}
(E-v_d(x))u_\pm^d(0,x)-u_\pm^d(-1,x)
 -\sigma u_\pm^d(0,x+\alpha)&=f_\pm^d(0,x),\notag\\
u_\pm^d(0,x)-\sigma u_\pm^d(-1,x+\alpha)
 &=f_\pm^d(-1,x). \label{eq:original-rows}
\end{align}
In particular,
\begin{equation}\label{eq:second-original}
u_\pm^d(-1,x)=
\sigma u_\pm^d(0,x-\alpha)-\sigma f_\pm^d(-1,x-\alpha).
\end{equation}
Since $\det U_d=1$ and $\|u_\pm^d\|_0\leq e^{\delta_*q}$,
integration gives
\[
1\leq e^{\delta_*q}
\bigl(\|u_\pm^d(0,\cdot)\|_{L^2}
     +\|u_\pm^d(-1,\cdot)\|_{L^2}\bigr).
\]
Using \eqref{eq:second-original}, for sufficiently small $\delta_*$,
\begin{equation}\label{eq:original-mass}
\|u_\pm^d(0,\cdot)\|_{L^2}
\geq C^{-1}e^{-\delta_*q}.
\end{equation}

Define
\[
\widehat u_\pm^d(k)
:=\widehat{(u_\pm^d(0,\cdot))}_{-k},
\qquad
u_\pm^d(0,x)=\sum_{k\in\Z}\widehat u_\pm^d(k)e^{-2\pi i kx}.
\]
Eliminating the second component from \eqref{eq:original-rows}
gives
\[
(v_d(x)-E)u_\pm^d(0,x)
+\sigma u_\pm^d(0,x-\alpha)+\sigma u_\pm^d(0,x+\alpha)
=-f_\pm^d(0,x)+\sigma f_\pm^d(-1,x-\alpha).
\]
Let $\theta=0$ if $\sigma=1$, and $\theta=1/2$ if $\sigma=-1$.
Taking Fourier coefficients yields the dual equation
\begin{equation}\label{eq:dual-scalar}
\sum_{j=-d}^{d}\widehat v_j\widehat u_\pm^d(k+j)
+\bigl(2\cos 2\pi(\theta+k\alpha)-E\bigr)
                 \widehat u_\pm^d(k)
=r_\pm^d(k).
\end{equation}
Thus $(L_{v_d,\alpha,\theta}-E)\widehat u_\pm^d=r_\pm^d$.
Analyticity and Parseval's identity give
\begin{align}
|\widehat u_\pm^d(k)|
&\leq C e^{\delta_*q}e^{-2\pi \epsilon |k|},\qquad
|r_\pm^d(k)|
\leq C e^{(-c+\delta_*)q}e^{-2\pi \epsilon |k|}, \label{eq:Fourier-bounds}\\
\sum_{k\in\Z}|\widehat u_\pm^d(k)|^2
&=\|u_\pm^d(0,\cdot)\|_{L^2}^2
 \geq C^{-1}e^{-2\delta_*q}. \label{eq:Fourier-mass}
\end{align}
These are the two nontrivial, exponentially decaying approximate
solutions to which we apply the dual center argument.

Set
\[
\begin{aligned}
\mathcal U_k^\pm=
(
\widehat u_\pm^d(k+d-1),\dots, \widehat u_\pm^d(k-d)
)^\top,\ \mathcal V_k^\pm
&=\mathcal J_2^{-1}
\mathcal O^{E,d}(\theta+k\alpha)^*S_d\mathcal U_k^\pm.
\end{aligned}
\]
Here $\mathcal O^{E,d}=(\mathcal O_+^{E,d},\mathcal O_-^{E,d})$
denotes its two columns.
Note that \[\mathcal V_k^\pm=\begin{pmatrix}
\psi_d\bigl(\mathcal O_-^{E,d}(\theta+k\alpha),\mathcal U_k^\pm\bigr)\\
-\psi_d\bigl(\mathcal O_+^{E,d}(\theta+k\alpha),\mathcal U_k^\pm\bigr)
\end{pmatrix}.\]
Thus $\mathcal V_k^\pm\in\C^2$ is the coordinate representation
of the center component of $\mathcal U_k^\pm\in\C^{2d}$ with
respect to the Hermitian symplectic frame
$\mathcal O^{E,d}(\theta+k\alpha)$.

The following lemma is the major adjustment from the  standard proof.
\begin{lemma}\label{lem:approximate-center}
There are $a_0,C>0$, independent of $E,d,q$, such that 
\begin{align}
\mathcal V_{k+1}^\pm
&=\mathcal M^{E,d}(\theta+k\alpha)\mathcal V_k^\pm
 +\mathcal R_k^\pm, \label{eq:dual-center-rec}\\
\|\mathcal V_k^\pm\|
&\leq C e^{\delta_*q}e^{-a_0|k|},\qquad
\|\mathcal R_k^\pm\|
\leq C e^{(-c+\delta_*)q}e^{-a_0|k|},
\label{eq:center-bounds}\\
\widehat u_\pm^d(k)
&=\mathcal O^{E,d}(\theta+k\alpha,0)\mathcal V_k^\pm
 +\zeta_\pm^d(k),\qquad
|\zeta_\pm^d(k)|
\leq C e^{(-c+\delta_*)q}e^{-a_0|k|}.
\label{eq:recover-columns}
\end{align}
\end{lemma}

\begin{proof}
We use properties (H1), (H3), and (H4) of
Theorem~\ref{theorem-main-general}.
All estimates here hold uniformly for $E\in\I$ and
sufficiently large $d$.
Fix a sign and suppress it from the notation. Choose
$0<a_0<2\pi\min\{\epsilon,\varepsilon\}$.

By (H1) and the definition of the weighted norm,
\begin{equation}\label{eq:frame-exponential}
\|\mathcal O^{E,d}(\cdot,j)\|_0
\leq K e^{2\pi\eta|j|},
\qquad -d\leq j\leq d-1.
\end{equation}
Only this fixed exponential bound is needed.

From $(L_{v_d,\alpha,\theta}-E)\widehat u=r$ we obtain
\[
\mathcal U_{k+1}
=L^{E,d}(\theta+k\alpha)\mathcal U_k
+\frac{r(k)}{\widehat v_d}e_1.
\]
Define
\(
\mathcal P^{E,d}(x)
=\mathcal J_2^{-1}\mathcal O^{E,d}(x)^*S_d
\).
Hermitian symplectic invariance and \eqref{eq:center-invariance} imply
\[
\mathcal P^{E,d}(x+\alpha)L^{E,d}(x)
=\mathcal M^{E,d}(x)\mathcal P^{E,d}(x).
\]
Since $S_de_1=\widehat v_d e_{d+1}$, with $e_{d+1}$ the row $-1$,
the projected equation is exactly
\begin{equation}\label{eq:forcing}
\mathcal V_{k+1}
=\mathcal M^{E,d}(\theta+k\alpha)\mathcal V_k
+\mathcal J_2^{-1}
 \mathcal O^{E,d}(\theta+(k+1)\alpha,-1)^*r(k).
\end{equation}
Thus $1/\widehat v_d$ disappears from the error term.
By \eqref{eq:frame-exponential} and \eqref{eq:Fourier-bounds},
the last term in \eqref{eq:forcing} satisfies
\(
\|\mathcal R_k\|\leq
C e^{(-c+\delta_*)q}e^{-a_0|k|}.
\)

For the coordinate bound, use the entry formula above:
\[
(S_d)_{j,\ell}
=(\mathbf1_{j<0}-\mathbf1_{\ell<0})
\widehat v_{\ell-j}\mathbf1_{|\ell-j|\leq d}.
\]
Since $v\in C_{h_1}^\omega(\T)$,
$|\widehat v_k|\leq\|v\|_{h_1}e^{-2\pi h_1|k|}$.
Only indices across the origin contribute; then
$|j-\ell|=|j|+|\ell|$. Consequently,
\[
\|\mathcal P^{E,d}(x)e_\ell\|
\leq C\sum_{j\text{ across the origin from }\ell}
e^{2\pi\eta|j|}e^{-2\pi h_1(|j|+|\ell|)}\leq C'e^{-2\pi h_1|\ell|}.
\]
Summing against $\widehat u(k+\ell)$ and using
\eqref{eq:Fourier-bounds} proves
$\|\mathcal V_k\|\leq Ce^{\delta_*q}e^{-a_0|k|}$.

Let $\mathcal K^{E,d}$ be the kernel constructed in
Proposition~\ref{prop:backward-cut-kernel} by factoring
the complement of the center projection through
$L_{v_d,\alpha,\theta}-E$. It gives
\begin{equation}\label{eq:kernel-decay}
|\mathcal K^{E,d}(\theta;k,m)|
\leq C e^{-2\pi\varepsilon|k-m|}
\end{equation}
and the exact reconstruction formula
\begin{equation}\label{eq:exact-reconstruction}
\widehat u(k)
=\mathcal O^{E,d}(\theta+k\alpha,0)\mathcal V_k
+\sum_{m\in\Z}\mathcal K^{E,d}(\theta;k,m)r(m).
\end{equation}
Convolution with
\eqref{eq:Fourier-bounds} gives
\[
\left|\sum_m\mathcal K^{E,d}(\theta;k,m)r(m)\right|
\leq C e^{(-c+\delta_*)q}e^{-a_0|k|}
\]
by the choice of $a_0$.
Together with \eqref{eq:forcing}, this proves
Lemma~\ref{lem:approximate-center}.
\end{proof}

Now, by \eqref{eq:Fourier-mass}, \eqref{eq:recover-columns} and the
uniform bound on $\mathcal O^{E,d}(\cdot,0)$ imply
\[
\sum_k\|\mathcal V_k^\pm\|^2\geq C^{-1}e^{-2\delta_*q}.
\]
The tails in \eqref{eq:center-bounds} can be discarded outside
$|k|\leq C\delta_*q+O(\log q)$. Hence for each sign there is a
$k_\pm$ in this interval with
$\|\mathcal V_{k_\pm}^\pm\|\geq e^{-C\delta_*q-o(q)}$.
Using \eqref{eq:uniform-M} in \eqref{eq:dual-center-rec}, in either
time direction, gives
\begin{equation}\label{eq:center-mass-zero}
\|\mathcal V_0^\pm\|\geq e^{-C\delta_*q-o(q)}.
\end{equation}

Set
\[
\mathcal V_k=(\mathcal V_k^+,\mathcal V_k^-),
\qquad D_k=\det\mathcal V_k.
\]
Expanding the determinant in \eqref{eq:dual-center-rec} gives
\begin{equation}\label{eq:det-rec}
D_{k+1}=\det\bigl(\mathcal M^{E,d}(\theta+k\alpha)\bigr)D_k+\eta_k,
\qquad
|\eta_k|\leq C e^{(-c+C\delta_*)q}e^{-2a_0|k|}.
\end{equation}
We use $|\det\mathcal M^{E,d}|=1$, which follows from Hermitian
symplecticity. Since $D_N\to0$ as $N\to+\infty$ by
\eqref{eq:center-bounds}, iteration of \eqref{eq:det-rec} yields
\begin{equation}\label{eq:det-small}
|D_0|\leq\sum_{k=0}^{\infty}|\eta_k|
\leq C e^{(-c+C\delta_*)q}.
\end{equation}
Thus the two center vectors at $k=0$ are almost collinear.

Choose $\gamma_0\in\C$ so that
$\mathcal V_0^+-\gamma_0\mathcal V_0^-$ is perpendicular to
$\mathcal V_0^-$. Equations \eqref{eq:center-mass-zero} and
\eqref{eq:det-small} imply
\begin{align}
|\gamma_0|&\leq e^{C\delta_*q+o(q)},\qquad
\|\mathcal V_0^+-\gamma_0\mathcal V_0^-\|
=\frac{|\det\mathcal V_0|}{\|\mathcal V_0^-\|}
 \leq e^{(-c+C\delta_*)q+o(q)}. \label{eq:combination-zero}
\end{align}
Subtract $\gamma_0$ times the minus equation from the plus equation
in \eqref{eq:dual-center-rec}. The same $\gamma_0$, independent of
$k$, then satisfies
\[
\begin{split}
\mathcal V_{k+1}^+-\gamma_0\mathcal V_{k+1}^-
={}&\mathcal M^{E,d}(\theta+k\alpha)
       (\mathcal V_k^+-\gamma_0\mathcal V_k^-)+\mathcal R_k^+-\gamma_0\mathcal R_k^-.
\end{split}
\]
Consequently, for a fixed $C_0>0$ depending on $K$,
\begin{equation}\label{eq:combination-propagation}
\|\mathcal V_k^+-\gamma_0\mathcal V_k^-\|
\leq e^{(-c+C\delta_*)q+C_0|k|+o(q)}.
\end{equation}

For $|k|\leq \delta_0q$, recover the scalar coefficients using
\eqref{eq:recover-columns} and \eqref{eq:combination-propagation}.
For $|k|>\delta_0q$, use the Fourier tails in \eqref{eq:Fourier-bounds}.
It follows that
\[
\begin{split}
\sum_{k\in\Z}
|\widehat u_+^d(k)-\gamma_0\widehat u_-^d(k)|
\leq{}&
 e^{(-c+C\delta_*+C_0\delta_0)q+o(q)}+e^{(-2\pi\epsilon\delta_0+C\delta_*)q+o(q)}.
\end{split}
\]
First choose $\delta_0>0$ with $C_0\delta_0<c/4$ and set
$c_1=\min\{c/16,2\pi\epsilon\delta_0/8\}$. Then choose $\delta_*>0$ so small that
all accumulated losses $C\delta_*$ are smaller than
$\min\{c/4,2\pi\epsilon\delta_0/4,c_1/2\}$. Make this choice also small enough to
absorb the errors in \eqref{eq:original-mass} and
\eqref{eq:center-mass-zero}. For large $q$, the right-hand side
is at most $e^{-2c_1q}$. Fourier inversion gives
$\|u_+^d(0,\cdot)-\gamma_0u_-^d(0,\cdot)\|_0\leq e^{-2c_1q}.$ 
Equation \eqref{eq:second-original} gives a bound of the same
exponential order for the component $-1$.
Therefore
$\|u_+^d-\gamma_0 u_-^d\|_0 \leq e^{-c_1q}.$
For every $x$, the determinant normalization now yields
$$
1=|\det U_d(x)|
=|\det(u_+^d(x)-\gamma_0u_-^d(x),u_-^d(x))|
\leq e^{-c_1q}e^{\delta_*q}<1,
$$
where $\delta_*$ was chosen also smaller than $c_1/2$.
This is the desired contradiction.
\end{proof}

\section{Quantitative almost reducibility}

We now return to $E_{\mathbf k}^d\to E_{\mathbf k}$. Apply
Theorem~8.8 or Theorem
9.8 in \cite{LXZ} to the original cocycle
$(\alpha,S_{E_{\mathbf k}}^v)$. Using their notation, there are
a fixed $\gamma>0$, conjugacies $\bar B_{q_n} \in C^\omega_\gamma(\mathbb{T},\mathrm{PSL}(2,\mathbb{R}))$ with $\deg \bar B_{q_n}=2\mathbf{k}$\footnote{In the sense of \cite[Definition A.1]{LW}.} and real constants
$c_{q_n}$ such that
\begin{equation}\label{eq:AR}
(-1)^{\mathbf l}\bar B_{q_n}(x+\alpha)^{-1}
 S_{E_{\mathbf k}}^v(x)\bar B_{q_n}(x)
=I_2+c_{q_n}\mathfrak L+F_{q_n}(x),
\qquad
\mathfrak L=\begin{pmatrix}0&1\\0&0\end{pmatrix},
\end{equation}
where
\begin{equation}\label{eq:AR-bounds}
\|\bar B_{q_n}\|_\gamma\leq e^{o(q_n)},
\qquad
\|F_{q_n}\|_\gamma\leq e^{-\kappa q_n}.
\end{equation}
Write
\begin{equation}\label{eq:AR-d}
(-1)^{\mathbf l}\bar B_{q_n}(x+\alpha)^{-1}
 S_{E_{\mathbf k}^d}^{v_d}(x)\bar B_{q_n}(x)
=I_2+c_{q_n}\mathfrak L+F_{q_n}(x)+Q^d_{q_n}(x).
\end{equation}
Note by \eqref{eq:ends}, 
\begin{equation}\label{eq:Qd}
\begin{split}
\|Q^d_{q_n}\|_\gamma
&\leq\|\bar B_{q_n}\|_\gamma^2
 (|E_{\mathbf k}^d-E_{\mathbf k}|+\|v_d-v\|_\gamma)\leq2\|\bar B_{q_n}\|_\gamma^2\|v_d-v\|_\gamma.
\end{split}
\end{equation}
For each $q_n$, choose $d(q_n)$ so large that $
\|Q^d_{q_n}\|_\gamma\leq e^{-\kappa' q_n}$ for every $d\geq d(q_n)$.

\begin{lemma}\label{lem:parabolic}
For sufficiently large $q_n$ and for all sufficiently large $d$, we have
\(
|c_{q_n}|>e^{-o(q_n)}.
\)
\end{lemma}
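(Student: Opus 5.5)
Argue by contradiction, using Proposition~\ref{prop:subcritical-duality} as the forbidden configuration. Suppose that along a sequence $q_n\to\infty$ (and for some $d\geq d(q_n)$) we have $|c_{q_n}|\leq e^{-\delta q_n}$ for some fixed $\delta>0$. This is what the negation of $|c_{q_n}|>e^{-o(q_n)}$ gives after passing to a subsequence. Then \eqref{eq:AR-d} reads
\[
(-1)^{\mathbf l}\bar B_{q_n}(x+\alpha)^{-1}S_{E_{\mathbf k}^d}^{v_d}(x)\bar B_{q_n}(x)=I_2+G_{q_n}(x),
\]
with
\[
\|G_{q_n}\|_\gamma\leq e^{-\delta q_n}+e^{-\kappa q_n}+e^{-\kappa' q_n}\leq e^{-c q_n}
\]
for $c:=\tfrac12\min\{\delta,\kappa,\kappa'\}$. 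Setting $U_d:=\bar B_{q_n}$, we get
\[
S_{E_{\mathbf k}^d}^{v_d}U_d-\sigma U_d(\cdot+\alpha)=\sigma U_d(\cdot+\alpha)G_{q_n},\qquad \sigma=(-1)^{\mathbf l}.
\]
The right-hand side has $\|\cdot\|_\gamma$ norm at most $e^{o(q_n)}e^{-cq_n}$. Together with $\|U_d\|_\gamma\leq e^{o(q_n)}\leq e^{\delta_* q_n}$ for large $q_n$, this is exactly \eqref{eq:forbidden} with $\epsilon=\gamma$ (shrink $\gamma$ if needed so that $\gamma<h_1$) and energy $E=E_{\mathbf k}^d\in\I$, which is guaranteed by \eqref{eq:ends}. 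Proposition~\ref{prop:subcritical-duality} holds uniformly in $E\in\I$ and $d\geq d_0$, so this is impossible once $q_n>q_*$. Hence $|c_{q_n}|\geq e^{-\delta q_n}$ for every $\delta>0$ and all large $q_n$, which is the claim.

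The technical point is that $\bar B_{q_n}$ takes values in $\mathrm{PSL}(2,\R)$ and has degree $2\mathbf k$, whereas Proposition~\ref{prop:subcritical-duality} requires $U_d\in C^\omega_\epsilon(\T,\mathrm{SL}(2,\R))$. I would lift $\bar B_{q_n}$ to a map $B$ on the double cover $\R/2\Z$, with $B(x+1)=\pm B(x)$. Because the degree is even ($2\mathbf k$), I expect the sign to be $+$, so that $B$ is a genuine $1$-periodic $\mathrm{SL}(2,\R)$-valued map. In that case the argument above goes through directly, with the $\pm$ ambiguity absorbed into $\sigma$.

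If instead the lift were only antiperiodic, I would switch to $x\mapsto x/2$ with frequency $\alpha/2$, or multiply by a rotation $R_{\mathbf k x}$ to fix periodicity. However, that alters the dual equation by the phase $\theta\in\{0,1/2\}$, and this is exactly why Proposition~\ref{prop:subcritical-duality} allows both $\sigma=\pm1$. So I expect the main obstacle to be only this bookkeeping of the lift and the sign $\sigma$, verifying that $\sigma U_d(\cdot+\alpha)$ matches the left side of \eqref{eq:AR-d}. The analytic estimates are routine given \eqref{eq:AR-bounds} and the choice of $d(q_n)$.
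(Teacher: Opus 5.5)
Your argument is the paper's proof. You set $U_d=\bar B_{q_n}$ and $\sigma=(-1)^{\mathbf l}$, absorb $c_{q_n}\mathfrak L+F_{q_n}+Q^d_{q_n}$ into the error at $E=E_{\mathbf k}^d\in\I$, and contradict Proposition~\ref{prop:subcritical-duality}, using the even-degree $\bar B_{q_n}$ directly as an $\mathrm{SL}(2,\R)$-valued map exactly as the paper does. Your fallback for an antiperiodic lift would not work as written: right-multiplying by a nonconstant rotation $R_{mx}$ turns $\sigma I$ into $\sigma R_{-m\alpha}$ rather than something near $\pm I$, rescaling $x\mapsto x/2$ leaves the setting of Theorem~\ref{theorem-main-general}, and $\sigma=\pm1$ only encodes $(-1)^{\mathbf l}$; the natural fix would instead be half-integer Fourier modes, giving the dual phase $\theta+\alpha/2$, where the proof of Proposition~\ref{prop:subcritical-duality} runs unchanged since it is uniform in the phase.
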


\begin{proof}
Set
\(
U_d(x)=\bar B_{q_n}(x)\), \(\sigma=(-1)^{\mathbf l}.
\)
Multiplying \eqref{eq:AR-d} by $\bar B_{q_n}(x+\alpha)$ gives
\[
\begin{split}
S_{E_{\mathbf k}^d}^{v_d}(x)U_d(x)-\sigma U_d(x+\alpha)
=\sigma U_d(x+\alpha)
 \bigl(c_{q_n}\mathfrak L+F_{q_n}(x)+Q^d_{q_n}(x)\bigr).
\end{split}
\]
Suppose $|c_{q_n}|\leq e^{-\eta q_n}$ for some fixed
$\eta>0$ along a subsequence. Then for $d\geq d(q_n)$,
\[
\|U_d\|_\gamma\leq e^{o(q_n)},\qquad
\|S_{E_{\mathbf k}^d}^{v_d}U_d-\sigma U_d(\cdot+\alpha)\|_\gamma
\leq e^{-\min\{\eta,\kappa,\kappa'\}q_n+o(q_n)}.
\]
This contradicts Proposition~\ref{prop:subcritical-duality}.
\end{proof}

After a constant diagonal conjugacy of subexponential size \cite[Remark 9.9]{LXZ}, we may
assume $0<|c_{q_n}|\leq1$, retaining \eqref{eq:AR-bounds}.
As in \cite{LXZ}, the lower bound is always understood
together with the bound on the chosen conjugacy.

\subsection{Proof of Theorem~\ref{thm:subcritical}}
It then follows by the generalized Moser-P\"oschel argument or the cone argument \cite{AYZ} that the gap length are subexponential in $q_n$ and stablized. We complete the proof by the continuity of the gap ends on $v$, for example, \eqref{eq:ends}.

\appendix

\section{Hyperbolic projection}
\label{app:backward-cut-kernel}

Fix $E,\theta,d$ and write
$H_d=L_{v_d,\alpha,\theta}-E$, $x_k=\theta+k\alpha$. 
\begin{proposition}\label{prop:backward-cut-kernel}
Let $\varepsilon$ be the fixed shift chosen in Theorem \ref{theorem-main-general}, we have
\begin{equation}\label{eq:backward-factorization}
\delta_k^*\bigl(I-\Lambda_kG_d(E,x_k)S_d\Lambda_k^*\bigr)
=\mathcal K^{E,d}(\theta;k,\cdot)H_d,
\end{equation}
where
\begin{equation}\label{eq:cut-kernel-definition}
\mathcal K^{E,d}(\theta;k,m)
:=\begin{cases}
e^{-2\pi\varepsilon(k-m)}g_{v_d(\cdot+i\varepsilon)}^m(E,\theta,k),&m<k,\\
e^{2\pi\varepsilon(k-m)}g_{v_d(\cdot-i\varepsilon)}^m(E,\theta,k),&m\geq k.
\end{cases}
\end{equation}
Moreover,
\begin{equation}\label{eq:cut-kernel-decay}
|\mathcal K^{E,d}(\theta;k,m)|\leq K e^{-2\pi\varepsilon|k-m|}
\end{equation}
\end{proposition}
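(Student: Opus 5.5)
The plan is to read the two shifted resolvents as two \emph{different} formal inverses of $H_d$. Cutting between them at the site $k$ then leaves a finite boundary term, and that term should be exactly the center projection.

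\emph{Step 1: conjugation.} Let $D_\varepsilon=\mathrm{diag}(e^{2\pi\varepsilon n})_{n\in\Z}$. The Fourier coefficients of $v_d(\cdot\pm i\varepsilon)$ are $\widehat v_je^{\mp2\pi\varepsilon j}$, so formally
\[
L_{v_d(\cdot\pm i\varepsilon),\alpha,\theta}=D_\varepsilon^{\mp1}L_{v_d,\alpha,\theta}D_\varepsilon^{\pm1},
\]
up to checking the sign convention. Each entry of this identity involves only finitely many terms, because $H_d$ has range $d$. Set
\[
A_+(k,m)=e^{-2\pi\varepsilon(k-m)}g^m_{v_d(\cdot+i\varepsilon)}(E,\theta,k),\qquad A_-(k,m)=e^{2\pi\varepsilon(k-m)}g^m_{v_d(\cdot-i\varepsilon)}(E,\theta,k).
\]
From $R_\pm^{E,d}(\theta)\bigl(L_{v_d(\cdot\pm i\varepsilon),\alpha,\theta}-E\bigr)=I$ and the conjugation, I will derive the row identities
\[
\sum_m A_\pm(k,m)(H_d)_{m,n}=\delta_{kn}.
\]
These are termwise finite sums, so no convergence issue arises even though $A_\pm$ are exponentially weighted and not bounded on $\ell^2$.

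\emph{Step 2: cutting.} By definition $\mathcal K=A_+\mathbf 1_{m<k}+A_-\mathbf 1_{m\geq k}=A_+-W\mathbf 1_{m\ge k}$, where
\[
W(k,m)=A_+(k,m)-A_-(k,m)=u_d^m(E,\theta,k).
\]
Step 1 then gives
\[
(\mathcal K H_d)(k,n)=\delta_{kn}-\sum_{m\geq k}W(k,m)(H_d)_{m,n}.
\]
Since $WH_d=0$ row by row (Step 1 applied to both signs), we have $\sum_{m\ge k}W(k,m)H_{m,n}=-\sum_{m<k}W(k,m)H_{m,n}$. Finite range of $H_d$ then forces the sum to vanish unless $n\in[k-d,k+d-1]$. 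So the correction is supported in the window $\Lambda_k$. Expanding it, only pairs $(m,n)$ with $m$ and $n$ on opposite sides of the cut $k-\tfrac12$ survive, with coefficient $\pm\widehat v_{n-m}$. This is precisely the antisymmetric matrix $S_d$ from the Preliminaries, written in relative coordinates $j=m-k$, $\ell=n-k$.

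\emph{Step 3: identification with $G_d$.} By the covariance $g^{m}(E,\theta,k)=g^{m-k}(E,\theta+k\alpha,0)$, inherited from $L_{v,\alpha,\theta}$ and the shift, the restriction of $W(k,\cdot)$ to the window equals the row $0$ entries $u_d^{j}(E,x_k,0)$. By (H4) these are $-(G_d)_{0,j}(E,x_k)$. The boundary term therefore becomes $\delta_k^*\Lambda_kG_d(E,x_k)S_d\Lambda_k^*$, which gives \eqref{eq:backward-factorization}.

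I expect this bookkeeping to be the main obstacle. One has to match the reversed state ordering $d-1,\ldots,-d$, the sign $\mathbf 1_{j<0}-\mathbf 1_{\ell<0}$, and the convention $u^\ell(\cdot,m)$ (source $\ell$, target $m$) against the index order in $(G_d)_{m,\ell}$. If a transpose or sign mismatch appears, I would recheck it using $(G_dS_d)^2=G_dS_d$ and $G_d=\mathcal O\mathcal J_2\mathcal O^*$ from (H4).

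\emph{Step 4: decay.} For $m<k$ we have $|\mathcal K(\theta;k,m)|=e^{-2\pi\varepsilon(k-m)}|\langle\delta_k,R_+\delta_m\rangle|\le Ke^{-2\pi\varepsilon|k-m|}$ by (H3). The case $m\ge k$ is identical with $R_-$. This proves \eqref{eq:cut-kernel-decay} with no Combes–Thomas argument needed: the cut is chosen so that each exponential weight is always decaying.
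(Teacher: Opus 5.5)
Your proposal is correct and is essentially the paper's proof. The paper also uses the two entrywise left inverses $A_\pm$ of $H_d$ supplied by (H3) and writes the kernel as $A_-+\mathbf 1_{m<k}\,u_d^m(E,\theta,k)$, which is equivalent to your $A_+-W\mathbf 1_{m\ge k}$. It then uses $\sum_m u_d^m(E,\theta,k)H_d(m,\ell)=0$ to rewrite the boundary term as $\sum_m u_d^m(E,\theta,k)(\mathbf 1_{m<k}-\mathbf 1_{\ell<k})H_d(m,\ell)$, with $(\mathbf 1_{m<k}-\mathbf 1_{\ell<k})H_d(m,\ell)=(S_d)_{m-k,\ell-k}$, applies translation covariance and (H4) exactly as in your Step 3, and reads the decay off (H3).

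The only slip is the sign you flagged in Step 1. With $D_\varepsilon=\mathrm{diag}(e^{2\pi\varepsilon n})$ one has $L_{v_d(\cdot\pm i\varepsilon),\alpha,\theta}=D_\varepsilon^{\pm1}L_{v_d,\alpha,\theta}D_\varepsilon^{\mp1}$, not $D_\varepsilon^{\mp1}L_{v_d,\alpha,\theta}D_\varepsilon^{\pm1}$. It is this sign that makes your (correctly stated) $A_\pm$ satisfy $\sum_m A_\pm(k,m)(H_d)_{m,n}=\delta_{kn}$.
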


\begin{proof}
We use the resolvent entries and $u_d^\ell$ defined in Theorem \ref{theorem-main-general}(H3)--(H4).
By translation covariance,
\begin{equation}\label{eq:G-row}
(G_d)_{0,j}(E,x_k)=-u_d^{k+j}(E,\theta,k).
\end{equation}
The resolvent identities give, entrywise,
\begin{equation}\label{eq:two-formal-inverses}
\begin{aligned}
\sum_m e^{\mp 2\pi\varepsilon(k-m)}
g_{v_d(\cdot\pm i\varepsilon)}^m(E,\theta,k)H_d(m,\ell)
=\delta_{k\ell},\qquad\sum_m u_d^m(E,\theta,k)H_d(m,\ell)=0.
\end{aligned}
\end{equation}

Put $\chi_k(m)=\mathbf1_{m<k}$. Note that
\[
(S_d)_{m-k,\ell-k}
=(\chi_k(m)-\chi_k(\ell))H_d(m,\ell)
=(\mathbf1_{m<k}-\mathbf1_{\ell<k})
\widehat v_{\ell-m}\mathbf1_{|\ell-m|\leq d}
\]
with both sides understood to be zero outside the window. Therefore,
using \eqref{eq:G-row}, \eqref{eq:two-formal-inverses},
\[
\begin{aligned}
&\bigl[
\delta_k^*
\bigl(I-\Lambda_kG_d(E,x_k)S_d\Lambda_k^*\bigr)
\bigr](\ell)\\
&\quad=
\bigl[
e_d^*\bigl(I_{2d}-G_d(E,x_k)S_d\bigr)\Lambda_k^*
\bigr](\ell)\\
&\quad=\delta_{k\ell}
+\sum_m u_d^m(E,\theta,k)
(\chi_k(m)-\chi_k(\ell))H_d(m,\ell)\\
&\quad=\sum_m\Bigl[
e^{2\pi\varepsilon(k-m)}g_{v_d(\cdot-i\varepsilon)}^m(E,\theta,k)
+\chi_k(m)u_d^m(E,\theta,k)\Bigr]H_d(m,\ell).
\end{aligned}
\]
Reading off the factor in brackets and substituting the definition
of $u_d^\ell$ gives \eqref{eq:cut-kernel-definition}, and hence
\eqref{eq:backward-factorization}.
The bound in (H3) gives \eqref{eq:cut-kernel-decay}.
\end{proof}
\begin{remark}
   Although the left side of \eqref{eq:backward-factorization} is
independent of $\varepsilon$, its factorization through $H_d$ need not
be unique. For two admissible shifts $\varepsilon$ and $\varepsilon'$,
the corresponding kernels satisfy
$(\mathcal K_\varepsilon^{E,d}-\mathcal K_{\varepsilon'}^{E,d})H_d=0$.
\end{remark}


\begin{thebibliography}{99}

\bibitem{AA}
F. Argentieri and A. Avila,
in preparation.

\bibitem{Avi2023KAM}
A. Avila,
\emph{KAM, Lyapunov exponents, and the spectral dichotomy for typical
one-frequency Schr\"odinger operators},
preprint (2023), arXiv:2307.11071.

\bibitem{ABD}
A. Avila, J. Bochi, and D. Damanik,
\emph{Opening gaps in the spectrum of strictly ergodic Schr\"odinger
operators},
J. Eur. Math. Soc. \textbf{14} (2012), 61--106.

\bibitem{AJ05}
A. Avila and S. Jitomirskaya,
\emph{The Ten Martini Problem},
Ann. of Math. \textbf{170} (2009), 303--342.

\bibitem{AJ08}
A. Avila and S. Jitomirskaya,
\emph{Almost localization and almost reducibility},
J. Eur. Math. Soc. \textbf{12} (2010), 93--131.

\bibitem{AYZ}
A. Avila, J. You, and Q. Zhou,
\emph{Dry Ten Martini Problem in the non-critical case},
preprint (2024), arXiv:2306.16254v2.

\bibitem{BBL}
R. Band, S. Beckus, and R. Loewy,
\emph{The Dry Ten Martini Problem for Sturmian Hamiltonians},
preprint (2024), arXiv:2402.16703.

\bibitem{CEY}
M. D. Choi, G. A. Elliott, and N. Yui,
\emph{Gauss polynomials and the rotation algebra},
Invent. Math. \textbf{99} (1990), 225--246.

\bibitem{DGY}
D. Damanik, A. Gorodetski, and W. Yessen,
\emph{The Fibonacci Hamiltonian},
Invent. Math. \textbf{206} (2016), 629--692.

\bibitem{GJ}
L. Ge and S. Jitomirskaya,
\emph{Intrinsic symplectic structure and sharp arithmetic universality},
preprint (2026), arXiv:2407.08866v2.

\bibitem{GJY}
L. Ge, S. Jitomirskaya, and J. You,
\emph{Kotani theory, Puig's argument, and stability of The Ten Martini
Problem},
preprint (2023), arXiv:2308.09321.

\bibitem{GJY2}
L. Ge, S. Jitomirskaya, and J. You,
\emph{The Robust Ten Martini Problem},
preprint (2026), arXiv:2609.15812v1.

\bibitem{GeWXu}
L. Ge, Y. Wang, and J. Xu,
\emph{The Dry Ten Martini Problem for $C^2$ cosine-type quasiperiodic
Schr\"odinger operators},
preprint (2025), arXiv:2503.06918.

\bibitem{gaplabel}
R. Johnson and J. Moser,
\emph{The rotation number for almost periodic potentials},
Comm. Math. Phys. \textbf{84} (1982), 403--438.

\bibitem{LWYZ}
X. Li, Z. Wang, J. You, and Q. Zhou,
\emph{Transfer operators, canonical center dynamics, and spectral
applications for long-range operators},
preprint (2026), arXiv:2606.29154v1.

\bibitem{LW}
X. Li and L. Wu,
\emph{The fibered rotation number for ergodic symplectic cocycles and its
applications: I. Gap labelling theorem},
Math. Z. \textbf{311} (2025), Art.~53.

\bibitem{LXZ}
X. Li, D. Xu, and Q. Zhou,
\emph{Monotonicity, global symplectification and the stability of Dry Ten
Martini Problem},
preprint (2026), arXiv:2601.02222v3.

\bibitem{LY2015}
W. Liu and X. Yuan,
\emph{Spectral gaps of almost Mathieu operators in the exponential regime},
J. Fractal Geom. \textbf{2} (2015), no.~1, 1--51.

\bibitem{P}
J. Puig,
\emph{Cantor spectrum for the almost Mathieu operator},
Comm. Math. Phys. \textbf{244} (2004), 297--309.

\bibitem{P06}
J. Puig,
\emph{A nonperturbative Eliasson's reducibility theorem},
Nonlinearity \textbf{19} (2006), 355--376.

\bibitem{You}
J. You,
\emph{Some problems in quasiperiodic Schr\"odinger operators},
J. Math. Phys. \textbf{67} (2026), 062704.

\end{thebibliography}
\end{document}